\newtheorem{theorem}{Theorem}[section]
\newtheorem{lemma}[theorem]{Lemma}
\newtheorem{prop}[theorem]{Proposition}
\newtheorem{cor}[theorem]{Corollary}
\newtheorem{conj}{Conjecture}
\newtheorem{defn}{Definition}
\newcommand{\ba}{\mathbf{a}}
\newcommand{\bb}{\mathbf{b}}
\newcommand{\bc}{\mathbf{c}}
\newcommand{\be}{\mathbf{e}}
\newcommand{\bh}{\mathbf{h}}
\newcommand{\bi}{\mathbf{i}}
\newcommand{\br}{\mathbf{r}}
\newcommand{\bs}{\mathbf{s}}
\newcommand{\bv}{\mathbf{v}}
\newcommand{\bx}{\mathbf{x}}
\newcommand{\by}{\mathbf{y}}
\newcommand{\bz}{\mathbf{z}}
\newcommand{\cH}{\mathcal{H}}
\DeclareMathOperator{\Tr}{Tr}
\DeclareMathOperator{\sgn}{sgn}
\DeclareMathOperator{\spn}{span}
\title{Determinants of Steiner Distance Hypermatrices}
\author{Joshua Cooper, Zhibin Du}
\date{\today}
\begin{document}
\maketitle

\begin{abstract}
    Generalizing work from the 1970s on the determinants of distance hypermatrices of trees, we consider the hyperdeterminants of order-$k$ Steiner distance hypermatrices of trees on $n$ vertices.  We show that they can be nearly diagonalized as $k$-forms, generalizing a result of Graham-Lov\'{a}sz, implying a tensor version of ``conditional negative definiteness'', providing new proofs of previous results of the authors and Tauscheck, and resolving the conjecture that these hyperdeterminants depend only on $k$ and $n$ -- as Graham-Pollak showed for $k=2$.  We conclude with some open questions.
\end{abstract}

\section{Introduction}

Graham and Pollak (\cite{GrPo71}) showed that the determinant of the distance matrix $D(T)$ of a tree $T$ on $n$ vertices only depends on $n$, and provided a formula for it: $(1-n)(-2)^{n-2}$.  Later, Graham and Lov\'{a}sz (\cite{GrLo78}) went  further by exploiting a ``nearly diagonalizing'' congruence of $D(T)$ as a quadratic form, i.e., choosing a matrix $P$ so that $P^{T}D(T)P$ is diagonal except for its first (or last) row and column.  The present authors and Tauscheck (\cite{CoDu24, CoTa24,CoTa24b}) considered  {\em Steiner distance hypermatrices} of order $k \geq 2$, a natural generalization of distance matrices (which correspond to the case of $k=2$), showing that the {\em hyperdeterminant} is $0$ iff (a) $n\geq 3$ and $k$ is odd, (b) $n=1$, or (c) $n=2$ and $k
\equiv 1 \pmod{6}$.  They conjectured that the hyperdeterminant's value -- not just its nonvanishing -- only depends on $n$ and $k$.  Here, we prove this conjecture and provide new proofs of some of the aforementioned results by showing that the Graham-Lov\'{a}sz near-diagonalization fully generalizes to $k > 2$.

Throughout, we consider a tree $T$ with vertex set $[|V(T)|] = \{1,\ldots,n\}$ where $n = |V(T)|$ and edge set $E(T)$.  Let $T$ be a tree and $S(U)$ the Steiner distance of the set $U \subseteq V(T)$, i.e., the fewest number of edges in any connected subgraph of $T$ whose vertex set contains $U$.  We often suppress braces in this notation, i.e., $S(x_1,\ldots,x_N) := S(\{x_1,\ldots,x_N\})$.  Given a vector $\bi \in V(T)^k$, we write $T_\bi$ for the Steiner tree of $\bi = \{i_1,\ldots,i_k\}$, i.e., the (unique) subtree $T' \subseteq T$ so that $|E(T')|=S(i_1,\ldots,i_k)$.  The interested reader is referred to \cite{Ma17} for much more on Steiner distance.  Fixing $k \geq 1$, write $M$ for the order-$k$ Steiner distance hypermatrix of $T$, i.e., for $\bi = (i_1,\ldots,i_k) \in V(T)^k$, define $M_{\bi} = S(\bi)$, and for vectors $\bx_1,\ldots,\bx_k \in \mathbb{R}^{V(T)}$, write 
$$
M(\bx_1,\ldots,\bx_k) := \sum_{\bi \in V(T)^k} M_{\bi} \prod_{j=1}^k x_{ji_j},
$$
where we denote the $j$-th component of a vector represented by any bolded letter $\mathbf{u}$ by the unbolded $u_j$, so that here the $j$-th component of $\bx_i$ is written $x_{ij}$.  Note that $M(\bx_1,\ldots,\bx_k)$ is a multilinear form, and setting $\bx_i = \bc$ for each $i$ yields a $k$-form (i.e., a homogeneous polynomial of degree $k$) in the coordinates of the vector $\bc$.

The {\em hyperdeterminant} of a fully symmetric order-$k$ hypermatrix (equivalently: a $k$-form) is, in analogy to determinants, the unique monic irreducible polynomial in the hypermatrix entries whose vanishing is equivalent to the existence of a nontrivial solution to the system of equations given by $\nabla M(\bx,\ldots,\bx) = \mathbf{0}$, i.e., 
$$
\sum_{\bi \in V(T)^{k-1}} M_{a\bi} \prod_{j=1}^{k-1} x_{i_j} = 0
$$
for each $a \in [n]$.  See \cite{Qi05} for more on the properties of this hyperdeterminant, which parallels the classical theory of determinants and much of which is derived from the theory of resultants via \cite{GeKaZe94}.  

In Section \ref{sec:only_on_H}, we show that the action of $M(\cdot,\ldots,\cdot)$ on the orthogonal complement $\mathcal{H}_n$ of $\mathbb{J}$, the all-ones vector, has a particularly compact form.  This implies that the map vanishes on $\mathcal{H}_n$ if $k \geq 3$ is odd and $n \geq 2$, and it is negative on $\mathcal{H}_n$ if $k \geq 2$ is even and $n \geq 3$.  This latter negativity property for $k=2$ is known as ``conditionally (strictly) negative definite'' in the matrix theory literature, and it has played an important role in explaining the relationship between kernels, metrics, and probability measures, especially in work of Kolmogorov, Schoenberg, and L\"{o}wner; see, for example, \cite{BaRa97, Jo15, Mi86}.  We then use a polarization identity due to Thomas to provide a diagonalization of $M$ on $\mathcal{H}_n$.  In Section \ref{sec:near-diagonalization}, we show that, by taking an appropriate basis of $\mathcal{H}_n$ and a vector other than $\mathbb{J}$ not contained in this codimension-$1$ subspace, it is possible to nearly diagonalize $M$ to a hypermatrix which does not depend on $T$ except via $k$ and $n$.  Since the change-of-basis matrix we employ is the zeta function of $T$ considered as a poset rooted at some vertex, it has determinant $1$, implying that the hyperdeterminant of $M$ also does not depend on $T$.  We conclude with a conjecture about the sign of $\det(M)$ and provide some evidence to support it.

\section{Action on $\mathbb{J}^\perp$} \label{sec:only_on_H}

For an edge $e \in E(T)$, let $A(e)$ and $B(e)$ denote the sets of vertices in the two components of $T-e$ (with ambiguity resolved arbitrarily).  For a tree $T$ on $n$ vertices, let $\ba_e$ be the $n$-vector with $i$-th component $1$ if $i \in A(e)$ and $0$ otherwise; write $\mathbb{J} := (1,\ldots,1) \in \mathbb{R}^n$ for the all-ones vector; and write $\ba'_e = \ba_e-\mathbb{J} |A(e)|/n$.  It is easy to check that $\mathcal{C}(T) = \{\ba'_e\}_{e \in E(T)}$ is a basis for $\cH_n$, the orthogonal complement of the span of the all-ones vector $\mathbb{J}$.

\begin{theorem} \label{thm:mainthm}
    For a tree $T$ on $n \geq 2$ vertices and a vector $\bc \in \mathbb{R}^n$, write $\alpha_e = \bc^T \ba_e$ and $C = \bc^T \mathbb{J}$.  Then
    $$
    M(\overbrace{\bc,\ldots,\bc}^k) =  \sum_{e \in E(T)} \left ( C^k - \alpha_e^k - (C - \alpha_e)^k \right ).
    $$
\end{theorem}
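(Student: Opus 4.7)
The plan is to write $M(\bc,\ldots,\bc)$ as a double sum, swap the order, and use a simple indicator decomposition of the Steiner distance.

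First I would observe that for any $\bi = (i_1,\ldots,i_k) \in V(T)^k$, the Steiner distance decomposes as a sum over edges:
$$
S(\bi) \;=\; |E(T_\bi)| \;=\; \sum_{e \in E(T)} \mathbf{1}[e \in E(T_\bi)].
$$
Substituting into the defining formula for $M(\bc,\ldots,\bc)$ and swapping the order of summation gives
$$
M(\bc,\ldots,\bc) \;=\; \sum_{e \in E(T)} \;\sum_{\bi \in V(T)^k} \mathbf{1}[e \in E(T_\bi)] \prod_{j=1}^k c_{i_j}.
$$

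Next I would identify exactly which $\bi$ contribute to the inner sum for a fixed edge $e$. The Steiner tree $T_\bi$ is the smallest connected subtree containing the entries of $\bi$, so $e$ lies in $T_\bi$ iff $\{i_1,\ldots,i_k\}$ has at least one element in $A(e)$ and at least one in $B(e)$ (otherwise $T_\bi$ lies entirely in one component of $T-e$). Equivalently, the $\bi$ for which $e \notin T_\bi$ are exactly those with all coordinates in $A(e)$ or all in $B(e)$. By inclusion-exclusion (these two sets are disjoint since $A(e)$ and $B(e)$ partition $V(T)$),
$$
\sum_{\bi : e \in E(T_\bi)} \prod_{j=1}^k c_{i_j} \;=\; \sum_{\bi \in V(T)^k} \prod_j c_{i_j} \;-\; \sum_{\bi \in A(e)^k}\prod_j c_{i_j} \;-\; \sum_{\bi \in B(e)^k}\prod_j c_{i_j}.
$$

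Finally I would evaluate each of these three sums by factoring the product: the first is $\bigl(\sum_{i \in V(T)} c_i\bigr)^k = C^k$; the second is $\bigl(\sum_{i \in A(e)} c_i\bigr)^k = (\bc^T \ba_e)^k = \alpha_e^k$; and the third is $\bigl(\sum_{i \in B(e)} c_i\bigr)^k = (C - \alpha_e)^k$ since $B(e) = V(T) \setminus A(e)$. Summing over $e$ yields the claimed identity.

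There is no real obstacle here — the proof is essentially a bookkeeping argument. The only step requiring care is the characterization of when $e \in E(T_\bi)$, which relies on the fact that $T_\bi$ is the unique minimal connected subtree containing $\{i_1,\ldots,i_k\}$ (so removing any of its edges would disconnect those vertices); this is where the tree structure of $T$ is used.
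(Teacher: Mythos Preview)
Your proof is correct and follows essentially the same approach as the paper: both write $S(\bi)$ as a sum of edge indicators, swap the order of summation, and use the characterization that $e \in E(T_\bi)$ iff the coordinates of $\bi$ meet both $A(e)$ and $B(e)$. The only cosmetic difference is that the paper computes the inner sum by explicitly summing $\binom{k}{t}\alpha_e^t\beta_e^{k-t}$ over $t=1,\ldots,k-1$ and then collapsing via the binomial theorem, whereas your complementary-counting step (total minus all-in-$A(e)$ minus all-in-$B(e)$) reaches $C^k-\alpha_e^k-(C-\alpha_e)^k$ in one stroke.
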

\begin{proof}
First, write $\bb_e = \mathbb{J} - \ba_e$, the indicator vector of the set $B(e)$.  Note that $\beta_e := \bc^T \bb_e = \bc^T \mathbb{J} - \bc^T \ba_e = C - \alpha_e$.
Let $\bi = (i_1,\ldots,i_{k}) \in V(T)^k$. Write $c^\bi$ to denote $\prod_{j=1}^k c_{i_j}$ if $\bc = (c_1,\ldots,c_n) \in \mathbb{R}^n$. Then
\begin{align*}
M(\overbrace{\bc,\ldots,\bc}^k) &= \sum_{\bi \in V(T)^k} S(\bi) c^\bi  \\
&= \sum_{\bi \in V(T)^k} \sum_{e \in E(T_\bi)} c^\bi.
\end{align*}
Consider the inner sum above; its terms are functions of a pair $(\bi,e)$, where $\bi=(i_1,\ldots,i_k) \in V(T)^k$ and $e \in E(T_\bi)$.  We may decompose this set of pairs according to the coordinates of $\bi$ which belong to $A(e)$ versus $B(e)$.  That is, for each $\mathbf{u},\mathbf{v} \in V(T)^\ast$, let $E(\mathbf{u},\mathbf{v})$ denote the set of edges $e$ of $T$ for which $u \in A(e)$ for all $u$ a coordinate of $\mathbf{u}$ and $v \in B(e)$ for all $v$ a coordinate of $\mathbf{v}$.  If $J = \{j_1 < \cdots < j_t\} \subseteq [k]$, write $\bi_J$ for $(i_{j_1},\ldots,i_{j_t})$.  Note that $e \in E(T_\bi)$ iff $e \in E(\bi_J,\bi_{[k] \setminus J})$ for some $J$ with $1 \leq |J| \leq k-1$.  Then we may write
\begin{align*}
\sum_{\bi \in V(T)^k} \sum_{e \in E(T_\bi)} c^\bi &= \sum_{\bi \in V(T)^k} \sum_{\substack{J \subseteq [k] \\ 1 \leq |J| \leq k-1}} \sum_{e \in E(\bi_J,\bi_{[k] \setminus J})} c^{\bi} \\ 
&= \sum_{\substack{J \subseteq [k] \\ 1 \leq |J| \leq k-1}} \sum_{\bi \in V(T)^k} \sum_{e \in E(\bi_J,\bi_{[k] \setminus J})}  c^\bi \\
&= \sum_{t=1}^{k-1} \sum_{J \in \binom{[k]}{t}} \sum_{e \in E(T)} \sum_{\substack{\bi \in V(T)^{k} \\ \text{s.t. } \bi_J \in A(e)^t \\ \text{\& } \bi_{[k] \setminus J} \in B(e)^{k-t}}} c^\bi \\ 
&= \sum_{t=1}^{k-1} \sum_{J \in \binom{[k]}{t}} \sum_{e \in E(T)} \sum_{\mathbf{u} \in A(e)^{t}} \sum_{\mathbf{v} \in B(e)^{k-t}} c^\mathbf{u} c^\mathbf{v} \\ 
&= \sum_{e \in E(T)}  \sum_{t=1}^{k-1} \binom{k}{t} \sum_{\mathbf{u} \in A(e)^{t}} \sum_{\mathbf{v} \in B(e)^{k-t}} c^\mathbf{u} c^\mathbf{v}  \\ 
&= \sum_{e \in E(T)}  \sum_{t=1}^{k-1} \binom{k}{t} \left ( \sum_{u \in A(e)} c_u \right )^t \left ( \sum_{v \in B(e)} c_v \right )^{k-t}  \\ 
&= \sum_{e \in E(T)} \sum_{t=1}^{k-1} \binom{k}{t} \alpha_e^t \beta_e^{k-t} \\
&=  \sum_{e \in E(T)} \left ( (\alpha_e + \beta_e)^k - \alpha_e^k - \beta_e^k \right ),
\end{align*}
from which the desired conclusion follows.
\end{proof}

We now deduce a few corollaries of Theorem \ref{thm:mainthm}.  The next result, which follows immediately from the preceding proof, states that  $M(\bc,\ldots,\bc)$ as a polynomial in $\{c_i\}_i$ acting on $\cH_n$ can be expressed in an exceptionally simple manner.

\begin{cor} \label{cor:diagonalform}
    For a tree $T$ on $n\geq 2$ vertices, $k \geq 2$ even, and a vector $\bc \in \cH_n$, if $\alpha'_e = \bc^T \ba'_e$, then
    $$
    M(\overbrace{\bc,\ldots,\bc}^k) = -2 \sum_{e \in E(T)} \alpha_e^{\prime k},
    $$
    so the left-hand side is a diagonal form of Waring rank at most $k-1$.
\end{cor}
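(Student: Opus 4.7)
The plan is to obtain this corollary as an immediate specialization of Theorem~\ref{thm:mainthm} to the subspace $\cH_n$. First, I would observe that $\bc \in \cH_n$ is precisely the condition $C := \bc^T \mathbb{J} = 0$. Plugging $C = 0$ into the identity of Theorem~\ref{thm:mainthm}, each summand reduces to $-\alpha_e^k - (-\alpha_e)^k$. Because $k$ is even, $(-\alpha_e)^k = \alpha_e^k$, so the summand collapses to $-2\alpha_e^k$, yielding
$$
M(\bc,\dots,\bc) = -2 \sum_{e \in E(T)} \alpha_e^k.
$$

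Next, I would verify that on $\cH_n$ the two linear functionals $\bc \mapsto \alpha_e$ and $\bc \mapsto \alpha'_e$ coincide:
$$
\alpha'_e = \bc^T \ba'_e = \bc^T \ba_e - \frac{|A(e)|}{n}\, \bc^T \mathbb{J} = \alpha_e,
$$
once again using $\bc \perp \mathbb{J}$. Substituting into the previous display produces the stated identity $M(\bc,\dots,\bc) = -2 \sum_e \alpha_e^{\prime k}$.

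Finally, the Waring-rank assertion is essentially free of work: the right-hand side already exhibits the restricted $k$-form as a sum of $|E(T)| = n-1$ scaled pure $k$-th powers of the linear functionals $\bc \mapsto \alpha'_e$ on $\cH_n$, and each summand $-2(\alpha'_e)^k$ can be absorbed into a single $k$-th power $\bigl((-2)^{1/k}\alpha'_e\bigr)^k$ over any field containing a $k$-th root of $-2$. There is no genuine obstacle here; the entire argument is a three-line specialization. The only substantive observation is that $\bc \perp \mathbb{J}$ does two things at once: it kills the $C^k$ contribution in Theorem~\ref{thm:mainthm}, and it makes the shift from $\ba_e$ to $\ba'_e$ invisible in the pairing with $\bc$, which is exactly what allows the basis $\mathcal{C}(T)$ of $\cH_n$ to serve as diagonal coordinates for the form.
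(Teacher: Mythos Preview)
Your proposal is correct and follows essentially the same route as the paper: both arguments specialize Theorem~\ref{thm:mainthm} by setting $C=0$, use evenness of $k$ to collapse $-\alpha_e^k-(-\alpha_e)^k$ to $-2\alpha_e^k$, and then observe that $\alpha_e=\alpha'_e$ on $\cH_n$ because the shift $\ba_e\mapsto\ba'_e$ lies in $\spn(\mathbb{J})$. Your treatment of the Waring-rank remark is slightly more explicit than the paper's (which leaves it implicit in the displayed diagonal expression), and you correctly identify the number of summands as $|E(T)|=n-1$.
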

\begin{proof}
    Note that $C = 0$ and $\beta_e = -\alpha_e$.  Furthermore, $\bc^T \ba_e = \bc^T \ba'_e$ for any $\bc \in \cH_n$. Since $k$ is even, then, applying Theorem \ref{thm:mainthm} yields
    $$
    M(\overbrace{\bc,\ldots,\bc}^k) = -\sum_{e \in E(T)} \alpha_e^k - \sum_{e \in E(T)} \beta_e^k = - 2 \sum_{e \in E(T)} \alpha_e^k.
    $$
\end{proof}

\begin{cor} For a tree $T$ on $n\geq 2$ vertices, $k \geq 2$ odd, and a vector $\bc \in \cH_n$, then
$$
    M(\overbrace{\bc,\ldots,\bc}^k) =  0.
$$
\end{cor}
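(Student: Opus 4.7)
The proof should be a direct application of Theorem \ref{thm:mainthm}, exactly parallel to the proof of Corollary \ref{cor:diagonalform} but exploiting the parity of $k$ in a different way. The plan is as follows.

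Since $\bc \in \cH_n$ is orthogonal to $\mathbb{J}$, we immediately get $C = \bc^T\mathbb{J} = 0$, and hence $\beta_e = C - \alpha_e = -\alpha_e$ for every edge $e \in E(T)$. Substituting these two identities into the formula from Theorem \ref{thm:mainthm} collapses the summand for each $e$ to $-\alpha_e^k - (-\alpha_e)^k$.

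Now I would invoke the hypothesis that $k$ is odd: this forces $(-\alpha_e)^k = -\alpha_e^k$, so each summand vanishes identically, and therefore so does the total sum $M(\bc,\ldots,\bc)$. There is no obstacle here — unlike the even case (Corollary \ref{cor:diagonalform}), where the two nontrivial terms reinforce each other to give $-2\sum_e \alpha_e^k$, in the odd case they cancel. The entire argument is a two-line specialization of Theorem \ref{thm:mainthm}, and there is no subtlety to be concerned about.
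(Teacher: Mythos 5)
Your proposal is correct and is exactly the paper's own argument: set $C=0$ and $\beta_e=-\alpha_e$ in Theorem \ref{thm:mainthm}, and observe that oddness of $k$ makes each summand $-\alpha_e^k-(-\alpha_e)^k$ vanish. No differences to report.
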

\begin{proof} 
    Note that $C = 0$ and $\beta_e = -\alpha_e$, so
    \begin{align*}
    M(\overbrace{\bc,\ldots,\bc}^k) &=  \sum_{e \in E(T)} \left ( C^k - \alpha_e^k - (C - \alpha_e)^k \right ) \\
    &=  \sum_{e \in E(T)} \left ( - \alpha_e^k - (- \alpha_e)^k \right ) = 0.
    \end{align*}
\end{proof}

\begin{defn}
    An order-$k$, dimension-$n$ hypermatrix $A$ is called {\em conditionally negative definite} if, for all nonzero $\bc \in \mathbb{R}^n$ so that $\bc^T \mathbb{J} = 0$,
$$
A(\overbrace{\bc,\ldots,\bc}^k) \leq 0.
$$
We say that $A$ is {\em conditionally strictly negative definite}  if the above inequality is strict. 
\end{defn}

\begin{theorem} \label{thm:CND} Let $T$ be a tree on $n \geq 3$ vertices, and suppose $k \geq 2$ is even.  Then $M$ is conditionally strictly negative definite.
\end{theorem}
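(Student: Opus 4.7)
The plan is to read the theorem as an immediate consequence of Corollary \ref{cor:diagonalform}, with one small additional nondegeneracy step needed to upgrade ``$\leq 0$'' to ``$< 0$''.

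First, I would take any nonzero $\bc \in \mathbb{R}^n$ with $\bc^T \mathbb{J} = 0$, i.e., $\bc \in \cH_n$. Since $k$ is even and $\bc \in \cH_n$, Corollary \ref{cor:diagonalform} applies and gives
\[
M(\overbrace{\bc,\ldots,\bc}^k) \;=\; -2 \sum_{e \in E(T)} \alpha_e^{\prime k},
\]
where $\alpha'_e = \bc^T \ba'_e$. Because $k$ is even, every summand $\alpha_e^{\prime k}$ is nonnegative, so $M(\bc,\ldots,\bc) \leq 0$ unconditionally. This already establishes conditional negative definiteness.

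To get strict negativity, I need to rule out the possibility that $\alpha'_e = 0$ for every edge $e \in E(T)$. This is exactly where the basis statement about $\mathcal{C}(T)$ from the setup preceding Theorem \ref{thm:mainthm} does the work: the set $\{\ba'_e\}_{e \in E(T)}$ is a basis of the $(n-1)$-dimensional subspace $\cH_n$ (this uses $n \geq 2$, since $|E(T)| = n-1$). Therefore, if $\bc \in \cH_n$ is orthogonal to every $\ba'_e$, then $\bc$ is orthogonal to all of $\cH_n$, so $\bc = 0$. Contrapositively, whenever $\bc \in \cH_n \setminus \{0\}$, at least one $\alpha'_e$ is nonzero, which forces $\sum_{e \in E(T)} \alpha_e^{\prime k} > 0$ and hence $M(\bc,\ldots,\bc) < 0$.

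There is essentially no obstacle here; the bulk of the work was already absorbed into Theorem \ref{thm:mainthm} and Corollary \ref{cor:diagonalform}. The only thing to be careful about is remembering that the identity $\alpha_e = \alpha'_e$ holds precisely because $\bc \perp \mathbb{J}$, which is why the definition of ``conditional'' matters, and to note that the hypothesis $n \geq 3$ is only used insofar as the statement is about trees with a nontrivial $\cH_n$; the argument itself would go through for any $n \geq 2$.
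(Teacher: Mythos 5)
Your proof is correct, and its main step---reducing to Corollary \ref{cor:diagonalform} and observing that evenness of $k$ makes every summand $\alpha_e^{\prime k}$ nonnegative---is exactly the paper's. The one place you diverge is the strictness step: you rule out $\alpha'_e = 0$ for all $e$ by appealing to the assertion (made without proof in the setup) that $\{\ba'_e\}_{e \in E(T)}$ is a basis of $\cH_n$, so a vector of $\cH_n$ orthogonal to all of them must be zero. The paper instead proves the needed nonvanishing from scratch: it picks a coordinate $u$ with $c_u \neq 0$, orients each edge incident to $u$ so that $u \notin A(e)$, notes that the sets $A(e_j)$ over edges $e_j$ at $u$ partition $V(T)\setminus\{u\}$, and concludes from $\sum_i c_i = 0$ that some $\alpha_{e_j} \neq 0$. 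Your route is shorter but leans on the unproved basis claim; the paper's is self-contained and, as a bonus, locates a nonvanishing $\alpha_e$ at an edge incident to any vertex where $\bc$ is supported. Your closing remarks are also accurate: $\alpha_e = \alpha'_e$ precisely because $\bc \perp \mathbb{J}$, and the argument indeed needs only $n \geq 2$.
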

\begin{proof}
We apply the preceding corollary:
    $$
    M(\overbrace{\bc,\ldots,\bc}^k) = - 2 \sum_{e \in E(T)} \alpha_e^k.
    $$
The result then follows from the facts that $k$ is even and there must be an edge so that $\sum_{i \in A(e)} c_i \neq 0$.  Indeed, let $u$ be any coordinate so that $c_u \neq 0$, and let $e_1,\ldots,e_{\deg(u)}$ be all edges incident to $u$, where $\deg(u)$ denotes the degree of $u$.  Assume each $A(e)$ is chosen so that $u \not \in A(e)$.  Then every vertex is either $u$ itself, or belongs to exactly one $A(e_j)$, $j = 1, \ldots, \deg(u)$.  Thus,
$$
\sum_{i =1}^n c_i = c_u + \sum_{j=1}^{\deg(u)} \sum_{i \in A(e_j)} c_i = 0,
$$
so at least one of the sums $\sum_{i \in A(e_j)} c_i$ is nonzero.
\end{proof}

For a $k$-linear map $u : E^k \rightarrow F$, write $\tilde{u} : E \rightarrow F$ for the map so that, for each $\bx \in E$,
$$
\tilde{u}(\bx) = u(\bx,\ldots,\bx).
$$
For $\bh \in E$ and a map $v : E \rightarrow F$, write $\Delta_\bh v : E \rightarrow F$ for the map so that, for each $\bx \in E$,
$$
(\Delta_\bh v)(\bx) = v(\bx+\bh)-v(\bx).
$$
Finally, write $\Tr$ for the map which sends $v$ to $v(0)$.

\begin{theorem}[\cite{Th11}]\label{thm:polarization} Let $E$ and $F$ be linear spaces over a field $K$ of characteristic zero, and let
$u : E^k \rightarrow F$ be a symmetric $k$-linear map. Then we have the polarization identity
$$
u(\bx_1, \ldots , \bx_k) = \frac{1}{k!} \Tr \Delta _{\bx_k} \Delta_{\bx_{k-1}} \ldots \Delta_{\bx_1} \tilde{u}.
$$    
\end{theorem}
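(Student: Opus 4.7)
The plan is to unroll the iterated finite-difference operator into a signed sum, expand using multilinearity, and then kill every term except the symmetric sum of $u(\bx_{\sigma(1)}, \ldots, \bx_{\sigma(k)})$ over permutations $\sigma$ via inclusion-exclusion.

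First, by induction on $k$, one verifies the standard identity for iterated differences of any function $f : E \to F$:
$$
(\Delta_{\bh_k}\Delta_{\bh_{k-1}}\cdots\Delta_{\bh_1} f)(\bx) = \sum_{S \subseteq [k]} (-1)^{k-|S|} f\!\left(\bx + \sum_{i \in S} \bh_i\right).
$$
The induction step is just $\Delta_\bh g(\bx) = g(\bx+\bh)-g(\bx)$ applied to the sum and re-indexing subsets by whether they contain the newly introduced index. Next, specialize $f = \tilde{u}$, $\bh_i = \bx_i$, and apply $\Tr$ (i.e., evaluate at $\bx = 0$), yielding
$$
\Tr\,\Delta_{\bx_k}\cdots\Delta_{\bx_1}\tilde{u} = \sum_{S \subseteq [k]} (-1)^{k-|S|}\, u\!\left(\sum_{i \in S}\bx_i,\,\ldots,\,\sum_{i \in S}\bx_i\right).
$$

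Next, I would expand each summand using the multilinearity of $u$:
$$
u\!\left(\sum_{i \in S}\bx_i,\ldots,\sum_{i \in S}\bx_i\right) = \sum_{f : [k] \to S} u(\bx_{f(1)}, \ldots, \bx_{f(k)}).
$$
Substituting and swapping the order of summation gives
$$
\Tr\,\Delta_{\bx_k}\cdots\Delta_{\bx_1}\tilde{u} = \sum_{f : [k] \to [k]} u(\bx_{f(1)}, \ldots, \bx_{f(k)}) \sum_{S \supseteq \mathrm{im}(f)} (-1)^{k-|S|}.
$$
The inner sum is a truncated signed binomial sum: letting $a = |\mathrm{im}(f)|$, it equals $(-1)^{k-a}\sum_{T \subseteq [k]\setminus\mathrm{im}(f)}(-1)^{|T|}$, which vanishes unless $a = k$, i.e., unless $f$ is a bijection, in which case it equals $1$.

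Finally, by the assumed symmetry of $u$, every permutation $\sigma \in S_k$ contributes the same value $u(\bx_1, \ldots, \bx_k)$, so the surviving sum is $k! \cdot u(\bx_1, \ldots, \bx_k)$. Dividing by $k!$ gives the polarization identity. There is no serious obstacle here; the only delicate point is the inclusion-exclusion step showing non-surjective maps contribute zero, and this is a direct application of $\sum_{T \subseteq X}(-1)^{|T|} = 0$ for $X \neq \emptyset$, valid in any field of characteristic zero (indeed, of any characteristic, though we use characteristic zero to divide by $k!$ at the end).
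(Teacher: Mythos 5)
Your proof is correct and complete. Note, however, that the paper itself offers no proof of this statement: it is quoted verbatim from Thomas \cite{Th11} and used as a black box, so there is no internal argument to compare yours against. What you have written is the standard derivation --- unroll the iterated difference operator into the signed sum $\sum_{S \subseteq [k]} (-1)^{k-|S|}\, \tilde{u}\left(\sum_{i \in S}\bx_i\right)$, expand by multilinearity into a sum over maps $f : [k] \to [k]$, and kill the non-surjective $f$ with the identity $\sum_{T \subseteq X} (-1)^{|T|} = 0$ for $X \neq \emptyset$ --- and it is essentially the argument in Thomas's paper and in classical treatments of polarization. All the individual steps check out: the induction establishing the difference formula, the observation that only bijections survive, and the use of symmetry to collapse the $k!$ surviving terms. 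Your closing remark is also accurate: characteristic zero (or just invertibility of $k!$) is needed only for the final division, not for the inclusion-exclusion. One could quibble that the $S = \emptyset$ term should be noted to vanish because $\tilde{u}(0) = 0$, but this is immediate from $k$-linearity. In short, you have supplied a self-contained proof of a result the paper merely cites, and it is sound.
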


Write $P_T$ for the matrix giving components of vectors in $\mathbb{R}^n$ in the basis $\mathcal{C}(T)$, i.e., the $(n-1) \times n$ matrix with rows indexed by $E(T)$ whose row $e$ is the vector $\ba_e^{\prime T}$.  We use the symbol $\mathbb{I}_n^k$ for the order-$k$, dimension-$n$ ``identity hypermatrix'', i.e., the hypermatrix whose $(i_1,\ldots,i_k) \in [n]^k$ entry is $0$ unless $i_1 = \cdots = i_k$, in which case the entry is $1$.

Write $P'_T$ for the matrix whose first $n-1$ rows are $P_T$ and whose $n$-th row is $\mathbb{J}^T$.  We consider the columns of $P^{\prime-1}_T$ to be indexed by $E(T)$ in concordance with the edge ordering on $E(T)$ which gives the row labels of $P_T$, except for its final column.

\begin{lemma}
    For each $T$, the column of $P^{\prime-1}_T$ indexed by $e=\{i,j\}$ has $-1$ in row $i$, $1$ in row $j$, and $0$ in every other row, where $j \in A(e)$ and $i \in B(e)$; and the final column of $P^{\prime-1}_T$ consists of $\mathbb{J}/n$.  In other words, the first $n-1$ columns of $P^{\prime-1}_T$ are the incidence matrix of $T$ if all of its edges $e$ are oriented towards $A(e)$, and its final column is $\mathbb{J}/n$.
\end{lemma}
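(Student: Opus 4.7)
The plan is to verify directly that the matrix $Q$ described in the statement satisfies $P'_T Q = I_n$, from which $Q = P_T^{\prime -1}$ follows. Write $q_e = \be_j - \be_i$ for the column of $Q$ indexed by edge $e = \{i,j\}$ (with $j \in A(e)$ and $i \in B(e)$), and $\mathbb{J}/n$ for its last column. I will compute the four natural blocks of the product $P'_T Q$ separately, corresponding to the partition of $P'_T$ into its first $n-1$ rows (indexed by edges $f$) versus its final row $\mathbb{J}^T$, and of $Q$ into its first $n-1$ columns (indexed by edges $e$) versus its final column $\mathbb{J}/n$.

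For the top-left $(n-1) \times (n-1)$ block, the $(f,e)$ entry is
\[
\ba_f^{\prime T} q_e = (\ba'_f)_j - (\ba'_f)_i = (\ba_f)_j - (\ba_f)_i,
\]
since the constant offset $|A(f)|/n$ in $\ba'_f = \ba_f - \mathbb{J}|A(f)|/n$ cancels. When $e = f$, the orientation convention $j \in A(e)$, $i \in B(e)$ gives $1 - 0 = 1$. When $e \neq f$, I claim the entry vanishes; this is the one nontrivial step and I expect it to be the main (though mild) obstacle. The argument relies on the tree structure: since $e \neq f$, the edge $e$ still belongs to $T - f$, so its endpoints $i$ and $j$ lie in the same component of $T - f$. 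Consequently $(\ba_f)_i = (\ba_f)_j$ and the entry is $0$.

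The remaining three blocks are immediate. The top-right block has entries $\ba_f^{\prime T}(\mathbb{J}/n) = 0$, because $\ba'_f \in \cH_n$ is orthogonal to $\mathbb{J}$ by construction of the primed vectors. The bottom-left block has entries $\mathbb{J}^T q_e = 1 - 1 = 0$, since each $q_e$ has coordinate sum zero. Finally, the bottom-right entry is $\mathbb{J}^T \mathbb{J}/n = n/n = 1$. Assembling the four blocks yields $P'_T Q = I_n$, which proves the lemma.
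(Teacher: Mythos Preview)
Your proof is correct and follows essentially the same approach as the paper: both verify $P'_T Q = I_n$ directly, with the key step being that for distinct edges $e \neq f$ the endpoints of $e$ lie in the same component of $T-f$, forcing the off-diagonal entries to vanish. Your version is in fact slightly more thorough, as you explicitly treat the top-right block $\ba_f^{\prime T}(\mathbb{J}/n)=0$ via $\ba'_f \in \cH_n$, which the paper handles only implicitly.
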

\begin{proof}
    Let $X$ be the matrix described by the statement.  We show that $P'_T X = I_n$.  Let $e \in E(T)$ be the index of row $\br_e$ of $P'_T$, $f=ij$ in $E(T)$ the index of a column $\bs$ of $X$, where $j \in A(e)$ and $i \in B(e)$.  Then $\br_e \bs$ is the $(e,f)$ entry of $P'_T X$ and equals $-r_{ei}+r_{ej}$.  First, suppose $e \neq f$.  Note that $i$ and $j$ are either both in $A(e)$, or both in $B(e)$.  Thus, $r_{ei} = r_{ej}$ and $-r_{ei}+r_{ej}=0$.  On the other hand, if $e = f$, then $-r_{ei}+r_{ej}=-(-|A(e)|/n)+(1-|A(e)|/n)=1$.  The final row $\mathbb{J}^T$ of $P'_T$ has dot product $0$ with each column of $X$ except the last one, with which it has dot product $1$.  So $P'_T X = I_n$, and the conclusion follows. 
\end{proof}

To interpret the preceding result: The columns of the incidence matrix of $T$, oriented towards the sets $\{A(e)\}_e$ are a basis $\mathcal{I}_T$ of $\cH_n$.  Together with the vector $\mathbb{J}/n$, they are the columns of $P^{\prime-1}_T$ and form a basis of $\mathbb{R}^n$.  The basis change matrix from $\mathcal{I}_T \cup \{\mathbb{J}/n\}$ to the elementary basis $\mathcal{E}$ is $P^{\prime-1}_T$, and so the basis change matrix from $\mathcal{E}$ to $\mathcal{I}_T \cup \{\mathbb{J}\}$ is $P'_T$.   Thus, if we consider a vector $\bc \in \cH_n$ expressed with respect to $\mathcal{E}$, then multiplication by $P_T$ changes its representation to $\mathcal{I}_T$, because the component in the direction of $\mathbb{J}$ is $0$ and is discarded by using $P_T$ instead of $P'_T$.

\begin{cor} \label{cor:diagonalform2} For even $k\geq 2$ and any $\bc_1,\ldots,\bc_k \in \cH_n$,
$$
M(\bc_1,\ldots,\bc_k) = (-2 \mathbb{I}_{n-1}^k)(P_T \bc_1,\ldots,P_T \bc_k),
$$
i.e., the action of $M$ on $\cH_n$ is diagonalizable and is represented by $-2\mathbb{I}_{n-1}^k$ in the basis $\mathcal{I}_T$.
\end{cor}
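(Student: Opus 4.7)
The plan is to reconstruct the multilinear form from its diagonal restriction using the polarization identity (Theorem \ref{thm:polarization}). The work is nearly all done by Corollary \ref{cor:diagonalform}; the present statement merely extends the equality from the diagonal $\bc_1 = \cdots = \bc_k$ to general arguments.

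First I would observe that the $e$-th coordinate of $P_T \bc$ is exactly $\ba_e^{\prime T} \bc = \alpha'_e$, by the definition of $P_T$. Consequently, for $\bc \in \cH_n$ and even $k \ge 2$, Corollary \ref{cor:diagonalform} can be rewritten as
$$
M(\bc,\ldots,\bc) = -2 \sum_{e \in E(T)} (P_T \bc)_e^k = (-2\mathbb{I}_{n-1}^k)(P_T\bc,\ldots,P_T\bc),
$$
since the diagonal $k$-form of $\mathbb{I}_{n-1}^k$ is precisely $\bx \mapsto \sum_{e} x_e^k$. So the two sides of the claimed identity agree when all arguments coincide.

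Now define the symmetric $k$-linear form $N : \cH_n^k \to \mathbb{R}$ by $N(\bc_1,\ldots,\bc_k) := (-2\mathbb{I}_{n-1}^k)(P_T\bc_1,\ldots,P_T\bc_k)$; it is symmetric because $\mathbb{I}_{n-1}^k$ is a symmetric hypermatrix and the same linear map $P_T$ is applied to each argument. The restriction of $M$ to $\cH_n^k$ is likewise symmetric and $k$-linear. Applying Theorem \ref{thm:polarization} separately to $M|_{\cH_n^k}$ and to $N$, each is determined entirely by its associated diagonal polynomial $\tilde{u}$ via iterated difference operators on $\cH_n$, an $\mathbb{R}$-linear subspace (so characteristic zero and closed under the operations involved). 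Since we have just verified $\tilde{M}|_{\cH_n} = \tilde{N}$, polarization immediately gives $M(\bc_1,\ldots,\bc_k) = N(\bc_1,\ldots,\bc_k)$ for all $\bc_1,\ldots,\bc_k \in \cH_n$, which is the claim.

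There is no serious obstacle; the only thing to be careful about is that the polarization identity is being invoked on the subspace $\cH_n$ rather than all of $\mathbb{R}^n$, but since $\cH_n$ is itself a real vector space and both forms are defined and symmetric $k$-linear there, Theorem \ref{thm:polarization} applies verbatim. The final sentence of the corollary — that the action of $M$ on $\cH_n$ is diagonalizable in the basis $\mathcal{I}_T$ — follows by reading the identity through the basis change: the hypermatrix of $M|_{\cH_n}$ in the basis $\mathcal{I}_T$ is obtained from $M$ by multilinear pullback along $P_T$, and the identity above says this pulled-back hypermatrix is exactly $-2\mathbb{I}_{n-1}^k$.
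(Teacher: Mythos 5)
Your proposal is correct and follows essentially the same route as the paper: verify the identity on the diagonal via Corollary \ref{cor:diagonalform} (noting $(P_T\bc)_e = \alpha'_e = \alpha_e$ for $\bc \in \cH_n$), then invoke the polarization identity of Theorem \ref{thm:polarization} with $E = \cH_n$ to conclude that two symmetric $k$-linear forms agreeing on the diagonal must coincide. The paper's proof is just a terser version of yours; your extra care about applying polarization on the subspace $\cH_n$ is a point the paper handles implicitly by the same choice of $E$.
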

\begin{proof}
    Theorem \ref{thm:polarization} implies that a symmetric $k$-linear form $u : E^k \rightarrow F$ is uniquely determined by its values $\tilde{u}(\bc)$ over all $\bc \in E$.  In this case, we take $E = \cH_n$ and $F = \mathbb{R}$.  Since 
    $$
    \tilde{u}(\bc) = (-2 \mathbb{I}_{n-1}^k)(P_T \bc,\ldots,P_T \bc),
    $$
    for all $\bc \in \cH_n$ by Corollary \ref{cor:diagonalform}, the result follows. 
\end{proof}

Note that the above gives another way to see that $M$ is conditionally negative definite: For $\bc \in \cH_n$,
$$
M(\bc,\ldots,\bc) = -2 \|\bc'\|_k^k
$$
where $\bc'$ is $\bc$ written in the $\mathcal{I}_n$ basis.  This quantity is negative for any nonzero $\bc$.  In addition, for any $\bc \not \in \cH_n$, if we normalize so that $\sum_i c_i = 1$, then 
$$
M(\bc,\ldots,\bc) \geq 0
$$
iff $\|P_T \bc\|_k^k + \|\mathbb{J}-P_T \bc\|_k^k \leq n-1$, by Theorem \ref{thm:mainthm}; for $k$ even, this is a compact set, and the two-sided cone consisting of all its multiples is the largest set on which $M$ is positive semi-definite.

\section{Near-diagonalization} \label{sec:near-diagonalization}

Let $Z$ be the matrix of the zeta function of $T$, considered as a poset $(T,\preceq_v)$ rooted at an arbitrary vertex $v \in T$, i.e., $Z_{xy} = 1$ if $x$ lies on the unique $y-v$ path in $T$, and $0$ otherwise.  Then $Z^{-1}$ is the M\"obius function of $(T,\preceq_v)$, which it is easy to see has $(x,y)$ entry $1$ if $x=y$, $-1$ if $y$ is a neighbor of $x$ with $y \preceq_v x$, and $0$ otherwise.  See \cite{Go18} for details.  Note that, if we choose $v=n$, then $-Z^{-1}$ and $P_T^{\prime -1}$ agree in all entries other than their last column if we choose $A(e)$ to be the set containing $v$ for each $e$.  The $n$-th column of $-Z^{-1}$ is the elementary vector $\be_n$, whereas the $n$-th column of $P_T^{\prime -1}$ is $\mathbb{J}/n$.  If $\mathbf{c} \in \cH_n$, then the last coordinate of $P'_T\bc$ is $0$, so $-Z^{-1}P'_T\bc = P_T^{\prime-1}P'_T \bc = \bc$, i.e., $P'_T \bc = -Z \bc$.  Thus, by eliminating a factor of $(-1)^k=1$, we may restate Corollary \ref{cor:diagonalform2}: under the same hypotheses, 
$$
M(\bc_1,\ldots,\bc_k) = (-2 \mathbb{I}_{n-1}^k)(Z \bc_1,\ldots,Z \bc_k),
$$
However, it turns out that writing $M$ in this basis now makes it possible to write the $k$-linear form in a particularly simple way across all of $\mathbb{R}^n$.  The following is a generalization of a result of Graham-Lov\'{a}sz \cite{GrLo78} (their ``Fact 1''), which is the case of $k=2$.

\begin{theorem} \label{thm:genGL} For any $\bc_1,\ldots,\bc_n$, and $k \geq 2$ even,
$$
M(\bc_1,\ldots,\bc_n) = (U-2(\mathbb{I}_{n-1}^k\oplus [0]) )(Z\bc_1,\ldots,Z\bc_n),
$$
where $U$ is the hypermatrix whose $(i_1,\ldots,i_k)$ entry is $(-1)^{t-1}$ if there exists a vertex $w \in [n-1]$ and a set $S \subseteq [k]$ with $|S|=t \in [1,k-1]$ so that $i_j = n$ for $j \in [k] \setminus S$ and $i_j=w$ for $j \in S$, and $0$ otherwise.  If $k$ is odd, then
$$
M(\bc_1,\ldots,\bc_n)= U(Z\bc_1,\ldots,Z\bc_n) .
$$
\end{theorem}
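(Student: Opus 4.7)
\noindent\emph{Proof plan.} Both sides are symmetric $k$-linear forms in $\bc_1,\ldots,\bc_k$ (symmetry of the right-hand side follows from symmetry of $\mathbb{I}_{n-1}^k$ and the fact that $U$ is defined via an unordered condition on a set $S\subseteq[k]$), so by the polarization identity of Theorem \ref{thm:polarization} it suffices to verify the claim in the diagonal case $\bc_1=\cdots=\bc_k=\bc$. The plan is then to match the resulting scalar expression against the formula of Theorem \ref{thm:mainthm}.

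Set $\mathbf{d} := Z\bc$. The key interpretive step is to relate the components of $\mathbf{d}$ to the quantities $C$ and $\alpha_e$ appearing in Theorem \ref{thm:mainthm}. Using $v=n$ and the convention $A(e)\ni n$ for every edge $e$, the definition of $Z$ gives $d_x = \sum_y Z_{xy} c_y = \sum_{y:\, x \in \text{path}_T(y,n)} c_y$. For $x=n$, every $y$ has $n$ on its path to $n$, so $d_n=C$. For $x=w \in [n-1]$, the set of $y$ whose $n$-path passes through $w$ is precisely the side of the edge $e_w$ -- the edge joining $w$ to its parent in $(T,\preceq_n)$ -- that does \emph{not} contain $n$, namely $B(e_w)$. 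Hence $d_w = \beta_{e_w} = C - \alpha_{e_w}$ and $d_n - d_w = \alpha_{e_w}$. The map $w \mapsto e_w$ is a bijection $[n-1] \to E(T)$.

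Finally, I expand
$$
U(\mathbf{d},\ldots,\mathbf{d}) = \sum_{w=1}^{n-1}\sum_{t=1}^{k-1}\binom{k}{t}(-1)^{t-1}d_w^t d_n^{k-t}.
$$
Separating the $t=0$ and $t=k$ terms in the binomial identity $(d_n-d_w)^k = \sum_{t=0}^k \binom{k}{t}(-1)^t d_w^t d_n^{k-t}$, one finds that the inner sum equals $d_n^k + (-1)^k d_w^k - (d_n-d_w)^k$. For $k$ even, subtracting the $2d_w^k$ contribution from $-2(\mathbb{I}_{n-1}^k\oplus[0])$ gives $d_n^k - d_w^k - (d_n-d_w)^k$; for $k$ odd, this same expression arises immediately with no correction. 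Summing over $w$ and substituting $d_n=C$, $d_w=C-\alpha_{e_w}$, $d_n-d_w=\alpha_{e_w}$, the total collapses to $\sum_{e\in E(T)}[C^k-\alpha_e^k-(C-\alpha_e)^k]$, which is $M(\bc,\ldots,\bc)$ by Theorem \ref{thm:mainthm}.

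The only real (and mild) obstacle is the bookkeeping of the middle paragraph: making the sign conventions ($A(e)\ni v=n$, orientation ``toward $A(e)$'', the relation between $Z$ and $P_T^{\prime -1}$ already noted in the discussion preceding the theorem) line up so that $d_w$ is exactly $C-\alpha_{e_w}$ rather than its negation or $\alpha_{e_w}$. Once that identification is pinned down, the rest is a single application of the binomial theorem, unified across parities of $k$ by how the boundary term $(-1)^k d_w^k$ gets absorbed (either as is, when $k$ is odd, or via the $-2(\mathbb{I}_{n-1}^k\oplus[0])$ correction, when $k$ is even).
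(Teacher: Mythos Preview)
Your proposal is correct and follows essentially the same route as the paper: reduce to the diagonal case via polarization, identify the coordinates of $Z\bc$ as $d_n=C$ and $d_w=C-\alpha_{e_w}$, expand $U$ with the binomial theorem, and match against Theorem~\ref{thm:mainthm}. The only cosmetic difference is that you read off $d_w=\beta_{e_w}$ directly from the combinatorial meaning of $Z_{wy}$ (the subtree below $w$ is $B(e_w)$), whereas the paper obtains the same identification by computing the last column of $-ZP_T^{\prime-1}$ and tracking $\alpha'_j$ versus $\alpha_j$; both arguments are doing the same bookkeeping.
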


\begin{proof}
By Theorem \ref{thm:polarization}, it suffices to verify the result for the corresponding $k$-form. To that end, let $\bc' = Z\bc$, and write $c'_j$ for the $j$-th component of $\bc'$.  Then, for $k$ even,
\begin{align}
    (U-2(\mathbb{I}_{n-1}^k\oplus [0]) )(Z\bc,\ldots,Z\bc) &= (U-2 (\mathbb{I}_{n-1}^k\oplus [0]) )(\bc',\ldots,\bc') \label{eq4} \\
    &= \left ( \sum_{t=1}^{k-1} \binom{k}{t} (-1)^{t-1} \sum_{w=1}^{n-1} c^{\prime t}_w c^{\prime k-t}_n \right ) - 2  \sum_{w=1}^{n-1} c_w^{\prime k} \nonumber \\   
    &= \sum_{w=1}^{n-1} \left (-(c'_w-c'_n)^k + (-c'_n)^{k} + c_w^{\prime k} \right ) - 2 \sum_{w=1}^{n-1} c_w^{\prime k} \nonumber \\
    &= \sum_{w=1}^{n-1} \left (c_n^{\prime k} - c_w^{\prime k} -(c'_n-c'_w)^k \right ) . \nonumber
\end{align}
Label the edges of $T$ as $\{e_1,\ldots,e_{n-1}\}$, where $e_j$ is the edge incident to vertex $j$ whose other vertex is the neighbor of $j$ closer to $n$.  Write $\alpha_j = \alpha_{e_j}$ and $\alpha'_j = \alpha'_{e_j}$ for succinctness. Now, $P'_T \bc$ is a vector whose $j$-th coordinate is $\ba_{e_j}^{\prime T} \bc = \alpha'_j$ for $j < n$ and whose $n$-th coordinate is $\mathbb{J}^T \bc$.  

From the discussion preceding the statement of the present theorem, it is clear that the first $(n-1)\times(n-1)$ principal submatrix of $-ZP_T^{\prime -1}$ is the identity matrix.  It remains to describe its last column, which is $-Z \mathbb{J}/n$.  The $j$-th coordinate of $Z \mathbb{J}$ is the sum of all entries in the $j$-th row of $Z$, i.e., the number of vertices $v$ in $T$ so that $j \preceq_n v$.  Therefore, $-ZP_T^{\prime -1}$ is the matrix whose first $(n-1)\times(n-1)$ principal submatrix is the identity, and whose last column has entry $-|\{v \in T : j \preceq_n v\}|/n$ in row $j$, which equals $-|B(e_j)|/n$ if $j < n$ or $-1$ if $j=n$.  
Thus, $\bc' = Z \bc = (-Z P_T^{\prime -1}) (-P'_T \bc)$ is a vector whose $j$-th coordinate is $|B(e_j)|C/n-\alpha'_j$ for $j < n$, or $C$ for $j=n$, where recall that $C = \mathbb{J}^T \bc$.  Note that
$$
\frac{|B(e_j)|C}{n} - \alpha'_j = \frac{(n-|A(e_j)|)C}{n} - \alpha'_j = C - \alpha'_j - \frac{|A(e_j)|C}{n} = C - \alpha_j .
$$
That is,
$$
c'_j = \left \{ 
\begin{array}{ll}
C - \alpha_j & \textrm{if $j < n$} \\
C & \textrm{if $j=n$.}
\end{array}
\right .
$$
Therefore, we may rewrite (\ref{eq4}) as
\begin{align*}
    (U-2(\mathbb{I}_{n-1}^k\oplus [0]) )(Z\bc,\ldots,Z\bc) &= \sum_{j=1}^{n-1} \left ( C^{k} - (C-\alpha_j)^{k} -\alpha_j^k \right ) ,
\end{align*}
which agrees with the expression for $M(\bc,\ldots,\bc)$ in Theorem \ref{thm:mainthm}.  For any $k$, then 
\begin{align*}
    U(Z\bc,\ldots,Z\bc) &= U(\bc',\ldots,\bc') \\
    &= \sum_{t=1}^{k-1} \binom{k}{t} (-1)^{t-1} \sum_{j=1}^{n-1} c^{\prime t}_j c^{\prime k-t}_n  \\   
    &= - \sum_{j=1}^{n-1} \sum_{t=1}^{k-1} \binom{k}{t} (-c^{\prime}_j)^t c^{\prime k-t}_n  \\   
    &= - \sum_{j=1}^{n-1} \left ((c'_n-c'_j)^k - c^{\prime k}_n - (-c'_j)^{k} \right )   \\
    &= \sum_{j=1}^{n-1} \left (C^k + (\alpha_j - C)^{ k} -\alpha_j^k \right ), 
\end{align*}
which agrees with the expression for $M(\bc,\ldots,\bc)$ in Theorem \ref{thm:mainthm} when $k$ is odd.
\end{proof}

\begin{cor} For $k \geq 2$ even,
$$
    \det(M) = \det(U-2(\mathbb{I}_{n-1}^k\oplus [0])) ,
$$
where $U$ is the hypermatrix whose $(i_1,\ldots,i_k)$ entry is $(-1)^{t-1}$ if there exists a vertex $w \in [n-1]$ and a set $S \subseteq [k]$ with $|S|=t \in [1,k-1]$ so that $i_j = n$ for $j \in [k] \setminus S$ and $i_j=w$ for $j \in S$. In particular, $\det(M)$ depends on $T$ only through $n$ and $k$.
\end{cor}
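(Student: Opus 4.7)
The plan is to reduce the corollary to a routine application of Theorem \ref{thm:genGL} combined with the standard transformation law for hyperdeterminants under invertible linear substitutions. Theorem \ref{thm:genGL} exhibits $M$, for $k$ even, as
$$
M(\bc_1,\ldots,\bc_k) = N(Z\bc_1,\ldots,Z\bc_k), \qquad N := U - 2(\mathbb{I}_{n-1}^k \oplus [0]),
$$
where $N$ depends on $T$ only through $k$ and $n$. The task is to promote this identity of multilinear forms to the equality $\det(M) = \det(N)$ of hyperdeterminants.

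First I would invoke the transformation law: for an order-$k$ symmetric hypermatrix $N$ on $\mathbb{R}^n$ and an invertible $n\times n$ matrix $Z$, simultaneous composition with $Z$ in each of the $k$ slots scales the hyperdeterminant by a fixed power of $\det(Z)$ determined by $n$ and $k$ alone. In the resultant-based formulation of \cite{GeKaZe94, Qi05}, this is because the defining system $\nabla \tilde{M}(\bx) = \mathbf{0}$ is obtained from $\nabla \tilde{N}(\by) = \mathbf{0}$ via the invertible linear substitution $\by = Z\bx$, and the resultant transforms by a predictable power of $\det(Z)$ under such changes of variable.

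Second, I would observe that $\det(Z)=1$. Indeed, ordering the vertices of $T$ by any linear extension of the rooted poset $(T,\preceq_v)$, the zeta matrix $Z$ becomes upper triangular with $1$s on the diagonal; in particular $Z$ is invertible, so the substitution is legitimate, and any integer power of $\det(Z)$ equals $1$. The transformation law thus collapses to $\det(M)=\det(N)$, which is the displayed equality of the corollary. Since $N$ depends on $T$ only through $n$ and $k$, the same is true of $\det(M)$.

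The main obstacle I anticipate is bookkeeping around the precise statement of the hyperdeterminant's transformation law under a common invertible substitution in all arguments, since the exponent $k(k-1)^{n-1}$ of $\det(Z)$ is nontrivial to extract from the resultant definition. Fortunately, because we apply it only with $\det(Z)=1$, the exact exponent is immaterial: it suffices to invoke (or briefly rederive from the resultant's behavior under linear change of variable) the weaker fact that the hyperdeterminant is invariant under such substitutions up to \emph{some} fixed integer power of $\det(Z)$. One subtle point to be checked along the way is that $N$ is genuinely symmetric as a hypermatrix, which follows because the defining condition on $U$ depends only on the multiset $\{i_1,\ldots,i_k\}$, and $\mathbb{I}_{n-1}^k\oplus[0]$ is symmetric by construction.
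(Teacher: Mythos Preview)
Your proposal is correct and follows essentially the same route as the paper: invoke Theorem \ref{thm:genGL}, apply the transformation law for hyperdeterminants under a simultaneous invertible substitution (the paper cites Proposition 4.4 of \cite{HuHuLiQi13} for this, obtaining the exponent $(k-1)^n$ rather than your $k(k-1)^{n-1}$), and then observe that $Z$ is unitriangular so $\det(Z)=1$. As you note, the precise exponent is immaterial here, so the discrepancy does not affect the argument.
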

\begin{proof}
    It follows immediately from Theorem \ref{thm:genGL} and Proposition 4.4 in \cite{HuHuLiQi13} that
    $$
    \det(M) = \det(U-2(\mathbb{I}_{n-1}^k\oplus [0])) \cdot \det(Z)^{(k-1)^n}.
    $$
    However, $Z$ is upper-triangular with all diagonal entries $1$, so $\det(Z)=1$.
\end{proof}

We illustrate the above result by presenting a schematic of the hypermatrix $U-2(\mathbb{I}_{4}^4\oplus [0])$ (i.e., $n=5$ and $k=4$) in Figure \ref{fig1}.

\begin{figure}[h!]
 \centering
  \includegraphics[width=0.5\textwidth]{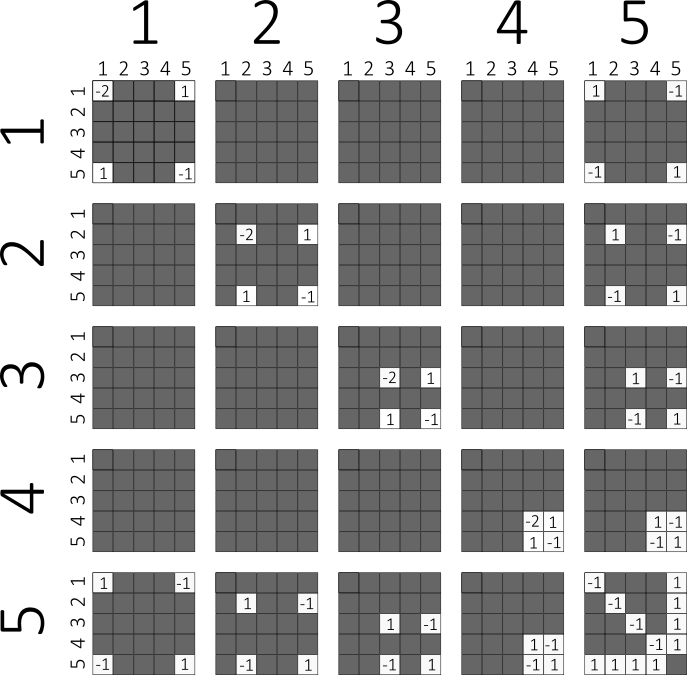}
 \caption{\label{fig1}Depiction of the nearly diagonalized order-$4$ Steiner distance hypermatrix of any tree on $5$ vertices.  Indices of the hypermatrix are represented by small and large numbers at the top and left of the diagram; gray represents a $0$ entry; nonzero entries are shown with their value ($-2$ or $\pm 1$).}
 \end{figure}

\section{Conclusion}

The following conjecture appears in $\cite{CoDu24}$, and has been checked numerically for $(k, n) = (4, 4)$, $(4, 5)$, and $(6, 4)$, it is trivially true for $n \leq 3$ and $k$ odd, and it follows from Graham-Pollak when $k=2$.  Write $\sgn(x)$ to mean $1$ if $x > 0$, $-1$ if $x < 0$, or $0$ if $x=0$.

\begin{conj} \label{conj:sign}
    For any tree on $n$ vertices, $\sgn(\det(M)) = (-1)^{n-1}$ when this quantity is nonzero.
\end{conj}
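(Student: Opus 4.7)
The plan is to use the near-diagonalization from Theorem~\ref{thm:genGL} and its Corollary to reduce the problem to the explicit hypermatrix $V := U - 2(\mathbb{I}_{n-1}^k \oplus [0])$, which depends only on $n$ and $k$, and then to determine the sign of $\det(V)$ via a continuous homotopy to a diagonal reference whose hyperdeterminant has known sign.

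For the main case of even $k \geq 2$, I would consider the one-parameter family of $k$-forms
\[
\tilde V_s(\bx) := \sum_{w=1}^{n-1}\bigl( x_n^k - x_w^k - s\,(x_n - x_w)^k \bigr), \qquad s \in [0, 1],
\]
interpolating between $\tilde V_1 = \tilde V$ and the diagonal form $\tilde V_0(\bx) = (n-1)\,x_n^k - \sum_{w<n} x_w^k$. The diagonal hyperdeterminant formula yields $\det(V_0) = (-1)^{(n-1)(k-1)^{n-1}}\,(n-1)^{(k-1)^{n-1}}$, whose sign is $(-1)^{n-1}$ since $(k-1)^{n-1}$ is odd when $k$ is even. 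Provided $\det(V_s) \neq 0$ for all $s \in [0,1]$, continuity of the hyperdeterminant in $s$ then gives $\sgn\det(M) = \sgn\det(V) = \sgn\det(V_0) = (-1)^{n-1}$.

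The first step is to verify that $\nabla \tilde V_s = \mathbf{0}$ has only the trivial \emph{real} solution for every $s \in [0,1]$: solving $\partial_w \tilde V_s = 0$ for $w < n$ yields $x_w = x_n / \bigl(1 + s^{-1/(k-1)}\bigr)$ uniformly in $w$, and then $\partial_n \tilde V_s = 0$ forces $x_n = 0$. This rules out real singularities of the projective variety $\{\tilde V_s = 0\}$ along the homotopy.

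The main obstacle is to upgrade this real nonsingularity to the full nonvanishing of $\det(V_s)$ over $\mathbb{C}$, i.e., to exclude complex singular fibers. Parameterizing complex critical points of $\tilde V_s$ by tuples $(\zeta_1,\ldots,\zeta_{n-1})$ of $(k-1)$-th roots of unity yields a compatibility equation
\[
\sum_{w=1}^{n-1} \bigl(1 + \delta\,\zeta_w\bigr)^{-(k-1)} = n-1, \qquad \delta := s^{-1/(k-1)} \in [1,\infty),
\]
and the claim reduces to showing this equation has no solution for any $\delta \in [1,\infty)$; the endpoints $\delta = 1$ (i.e.\ $s=1$) and $\delta \to \infty$ are immediate, and numerical checks for small $n, k$ support the claim throughout. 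I expect a proof to proceed via a careful modulus/argument analysis of each summand as a function of $\delta$, combined with pairing of complex-conjugate tuples so that the imaginary parts cancel only on a meager set disjoint from the real part's hitting $n-1$. Residual cases (specifically $n = 2$ with $k$ odd, $k \not\equiv 1 \pmod 6$) reduce via the second branch of Theorem~\ref{thm:genGL} to a low-dimensional homotopy of the same kind, where the computation can be verified directly.
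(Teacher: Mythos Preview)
This statement is an open \emph{conjecture} in the paper, not a theorem; there is no proof of it to compare against. The paper only establishes special cases: (i) $n=2$ for all $k$, via an explicit eigenvalue computation for $K_2$; (ii) $k=2$ for all $n$, by showing that $C := M(2J_n/n - I_n)$ is positive definite (whence $\sgn\det(M) = \sgn\det(C)/\sgn\det(2J_n/n - I_n) = (-1)^{n-1}$); and (iii) a partial result for $k=4$, namely that the symmetrized tensor $C_1 + C_3$ is positive definite, which falls short of the conjecture. The paper explicitly flags the general case as open and offers only computational evidence beyond these instances.

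Your homotopy strategy is genuinely different from anything the paper attempts, and the reduction to $V$, the choice of endpoint $V_0$, and the sign computation $\sgn\det(V_0) = (-1)^{n-1}$ are all correct. However, the proposal is not a proof: you yourself identify the crucial gap, namely that real nonsingularity of $\nabla\tilde V_s$ along $s \in [0,1]$ does not imply $\det(V_s) \neq 0$, since the hyperdeterminant vanishes precisely when there is a nontrivial \emph{complex} critical point. Your compatibility equation $\sum_w (1+\delta\zeta_w)^{-(k-1)} = n-1$ is the right object, but the sketch (``careful modulus/argument analysis'', ``pairing of complex-conjugate tuples'') is not an argument, and there is no reason to expect the imaginary-part cancellation locus to be disjoint from the real-part level set without substantial work. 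Indeed, the paper's own attempt to push the $k=2$ positive-definiteness argument to $k=4$ already fails to yield the full conjecture, which suggests the obstruction is real. As written, this is a plausible plan of attack on an open problem, not a proof.
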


Here we also verify the conjecture for $n=2$ and all $k$.

\begin{prop} Conjecture \ref{conj:sign} is true for $K_2$ and any $k \geq 2$.
\end{prop}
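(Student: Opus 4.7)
The plan is to apply Theorem \ref{thm:mainthm} with $T = K_2$. Writing $\bc = (x,y) \in \mathbb{R}^2$, the single edge $e$ gives $\alpha_e = x$ and $C - \alpha_e = y$, so
$$f(x,y) \;:=\; M(\overbrace{\bc,\ldots,\bc}^k) \;=\; (x+y)^k - x^k - y^k.$$
The hyperdeterminant $\det(M)$ coincides (up to a universal nonzero scalar depending only on $k$) with the resultant of the entries of $\nabla f$; since $f_x = kF$ and $f_y = kG$ with
$$F = (x+y)^{k-1} - x^{k-1}, \qquad G = (x+y)^{k-1} - y^{k-1},$$
the plan is to compute $\mathrm{Res}(F, G)$ of these two homogeneous polynomials of degree $k-1$ in $(x, y)$, verify the sign/normalization against the small cases $k = 2, 3$ (where $\det(M) = -1$ and $-3$ respectively), and then extract the sign of the result.

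Using $A^{k-1} - B^{k-1} = \prod_{j=0}^{k-2}(A - \omega^j B)$ with $\omega = e^{2\pi i/(k-1)}$, I would factor
$$F = y\prod_{j=1}^{k-2}\bigl((1 - \omega^j) x + y\bigr), \qquad G = x\prod_{j=1}^{k-2}\bigl(x + (1-\omega^j) y\bigr).$$
Expanding $\mathrm{Res}(F, G)$ by multiplicativity and using $\mathrm{Res}(\alpha x + \beta y,\, \gamma x + \delta y) = \alpha \delta - \beta \gamma$ for linear resultants, all but one pair contributes $\pm 1$; the remaining double product $\prod_{j,j'=1}^{k-2}[(1-\omega^j)(1-\omega^{j'}) - 1]$ telescopes via the identity $\prod_{j'=0}^{k-2}(z - \omega^{j'}) = z^{k-1} - 1$ (applied to the inner sum to simplify $\prod_{j'}[\alpha(1-\omega^{j'})-1] = \alpha^{k-1}-(-1)^{k-1}$ with $\alpha = 1-\omega^j$), yielding
$$\det(M) \;=\; -\prod_{j=1}^{k-2}\Bigl[(1 - \omega^j)^{k-1} - (-1)^{k-1}\Bigr].$$

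Finally, to conclude $\det(M) < 0$ whenever nonzero, I would show the displayed product is strictly positive. The involution $j \mapsto k-1-j$ on $[1, k-2]$ pairs $\omega^j$ with its conjugate $\omega^{k-1-j} = \overline{\omega^j}$, so paired factors are complex conjugates contributing $|X_j|^2 \geq 0$. For $k$ odd, the unique fixed point $j^\ast = (k-1)/2$ yields $\omega^{j^\ast} = -1$ and contributes $2^{k-1} - 1 > 0$. A factor vanishes iff $|1 - \omega^j| = 1$, i.e., iff $\omega^j$ is a primitive sixth root of unity, equivalently iff $6 \mid k - 1$---matching the known vanishing condition of the hyperdeterminant for $K_2$. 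Otherwise the product is strictly positive, so $\det(M) < 0 = (-1)^{n-1}$. The main obstacle is pinning down the precise universal constant relating $\det(M)$ to $\mathrm{Res}(F, G)$: this requires careful tracking of sign conventions between the Sylvester-style resultant and the monic hyperdeterminant, and can be handled either via direct small-case verification or by standard properties of the GKZ discriminant for binary forms.
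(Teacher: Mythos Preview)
Your approach is correct and genuinely different from the paper's. The paper quotes the eigenvalue list of $M$ for $K_2$ from \cite{CoDu24} --- namely $-1$ with multiplicity $k-1$ together with $\lambda_j=(1+\omega^j)^{k-1}-1$ for $j=0,\ldots,k-2$ --- observes that these are real with $\lambda_j=\lambda_{k-1-j}$, and then counts parities: $\lambda_0>0$ is unpaired, the possible fixed point $\lambda_{(k-1)/2}=-1$ is negative, and everything else comes in equal pairs, forcing an odd number of negative eigenvalues. Your route instead writes $\det(M)$ as the Sylvester resultant of $F=(x+y)^{k-1}-x^{k-1}$ and $G=(x+y)^{k-1}-y^{k-1}$, factors both over $\mathbb{C}$, and telescopes the double product to the closed form $-\prod_{j=1}^{k-2}\bigl[(1-\omega^j)^{k-1}-(-1)^{k-1}\bigr]$, whose positivity you then read off from the same $j\mapsto k-1-j$ conjugation symmetry. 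Your argument is more self-contained (it does not need the eigenvalue computation from \cite{CoDu24}) and yields an explicit product formula for $\det(M)$, not merely its sign; the paper's version is shorter once that citation is in hand.

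One point to tighten: your proposed ``calibration at $k=2,3$'' is not by itself a proof that the scalar relating $\det(M)$ to $\mathrm{Res}(F,G)$ is positive for all $k$, since that scalar depends on $k$. The clean fix is to use the definition the paper already adopts: in the Qi framework (see \cite{Qi05,HuHuLiQi13}), $\det(M)$ is exactly the resultant of the system $(M\bc^{k-1})_i=0$, and for $n=2$ those two forms are precisely your $F$ and $G$ (the factor $k$ from differentiating $f$ cancels because $(M\bc^{k-1})_i=\tfrac{1}{k}\partial_i f$). Hence $\det(M)=\mathrm{Res}(F,G)$ on the nose, and no separate sign-tracking is needed. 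A very minor quibble: ``a factor vanishes iff $|1-\omega^j|=1$'' is literally the modulus condition only; you should also note (as you implicitly do) that when $6\mid k-1$ one has $(1-\omega^j)^{k-1}=1=(-1)^{k-1}$ automatically, so the argument condition is satisfied and the ``iff'' is justified.
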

\begin{proof}
By \cite{CoDu24}, the Steiner order-$k$ hypermatrix eigenvalues of $K_2$ are $-1$ with multiplicity $k-1$, and $\lambda_j = [1+\cos(2\pi j/(k-1))+\mathrm{i} \sin(2 \pi j/(k-1))]^{k-1}-1$ for $j = 0, 1, \dots, k - 2$.  For $j = 0$, we have $\lambda_j = 2^{k-1} - 1 > 0$. It is straightforward to see that $\Re(\lambda_j)=\Re(\lambda_{k-1-j})$ for $j=0,\ldots,k-2$.  If $j = k-1-j$, then $j=(k-1)/2$, which implies that 
$$
\lambda_j = \left [1+\cos \pi + \mathrm{i} \sin \pi  \right ]^{k-1}-1 = (1-1+0)^{k-1}-1 = -1 < 0.
$$
Thus, the positive eigenvalues consist of an even number of the $\lambda_j$ along with $j=0$, i.e., there are an odd number of positive eigenvalues, an odd number of negative eigenvalues, and $\sgn(\det(M)) = -1$.
\end{proof}

We now reprove the conjecture for $k=2$ in order to suggest a route to generalization.

\begin{lemma} \label{lem:A_matrix}
    For each $\bv \in \mathbb{R}^n$, let $\bv_0$ be the projection of $\bv$ onto $\cH_n$ and let $\bv_1 = \bv - \bv_0$ be its projection onto $\spn(\mathbb{J})$.  The linear transformation that maps $\bv = \bv_0 + \bv_1$ to $-\bv_0 + \bv_1$ is represented by the matrix $A = 2J_n/n - I_n$, where $J_n$ is the $n \times n$ all-ones matrix and $I_n$ is the $n \times n$ identity matrix.  Furthermore, $A^{-1} = A$ and $A$ is orthogonal.
\end{lemma}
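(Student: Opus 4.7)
My plan is to proceed by direct computation, since the three assertions reduce to simple identities involving $J_n$.

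First, I would give explicit formulas for the two projections. The orthogonal projection onto $\spn(\mathbb{J})$ sends $\bv$ to $\frac{\bv^T \mathbb{J}}{n}\mathbb{J}$, which in matrix form is $\bv_1 = (J_n/n)\bv$. Consequently $\bv_0 = \bv - \bv_1 = (I_n - J_n/n)\bv$. Plugging these into the map $\bv \mapsto -\bv_0 + \bv_1$ gives
$$
-(I_n - J_n/n)\bv + (J_n/n)\bv = (2J_n/n - I_n)\bv,
$$
so the transformation is represented by $A = 2J_n/n - I_n$, as claimed.

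Next I would verify $A^2 = I_n$. The key identity is $J_n^2 = nJ_n$, since every entry of $J_n^2$ equals $\sum_{k=1}^n 1 = n$. Then
$$
A^2 = \left(\tfrac{2}{n}J_n - I_n\right)^2 = \tfrac{4}{n^2}J_n^2 - \tfrac{4}{n}J_n + I_n = \tfrac{4}{n}J_n - \tfrac{4}{n}J_n + I_n = I_n,
$$
so $A^{-1}=A$.

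Finally, for orthogonality, I would observe that both $J_n$ and $I_n$ are symmetric, hence $A^T = A$. Combined with $A^2 = I_n$, this gives $A^T A = A A = I_n$, so $A$ is orthogonal. There is no real obstacle here — the lemma is essentially bookkeeping — but if I wanted a more conceptual argument for the last two claims, I would note that $A$ is the reflection through the hyperplane $\spn(\mathbb{J})$ (equivalently, the orthogonal map that fixes $\spn(\mathbb{J})$ and negates $\cH_n$), so it is automatically an involution and an orthogonal transformation.
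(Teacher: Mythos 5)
Your proof is correct and takes essentially the same elementary computational route as the paper; the only cosmetic difference is that you derive $A$ from the explicit projection matrix $J_n/n$ onto $\spn(\mathbb{J})$, whereas the paper verifies that $A$ fixes $\mathbb{J}$ and negates a spanning set $\{\be_{ij}\}$ of $\cH_n$. Your verification of $A^2 = I_n$ via $J_n^2 = nJ_n$ and of orthogonality via $A^T = A$ matches the paper's concluding step, which it states without spelling out the computation.
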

\begin{proof}
    Let $A = \frac{2}{n} J_n - I_n$.  Then
    \begin{align*}
    A \mathbb{J} &= \frac{2}{n} J_n \mathbb{J} - \mathbb{J} \\
    &= \frac{2}{n} n \mathbb{J} - \mathbb{J} \\
    &= \mathbb{J}.
    \end{align*}
    Next, we show that each element of a spanning set of $\cH_n$ is mapped to its negative.  For $i \neq j$, let $\be_{ij}$ denote the vector in $\mathbb{R}^n$ whose $i$ coordinate is $1$, whose $j$ coordinate is $-1$, and whose other coordinates are $0$.  It is straightforward to see that that $\spn(\{e_{ij}\}_{i \neq j}) = \mathcal{H}_n$ and also that $A \be_{ij} = -\be_{ij}$.  The rest of the claim follows from the fact that $A^2=I_n$ and $A$ is symmetric.
\end{proof}

Set 
$$
C:= M \left( \frac{2}{n}J_n - I_n\right)
$$
and 
$$
D:= M \left( \frac{2}{n}J_n - I_n\right) + \left( \frac{2}{n}J_n - I_n\right) M \, ,
$$
where $M$ is the distance matrix of an $n$-vertex tree.  Clearly,  $D$ is symmetric, though $C$ may not be.

\begin{lemma} \label{lem-2}
$D$ is positive definite.
\end{lemma}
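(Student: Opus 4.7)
The plan is to expand $\bv^T D \bv$ via the orthogonal decomposition of Lemma \ref{lem:A_matrix} and reduce positivity to the conditional strict negative definiteness of $M$ on $\cH_n$ established (for $k=2$) by Theorem \ref{thm:CND}.

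Set $A = \frac{2}{n}J_n - I_n$, so $D = MA + AM$. For an arbitrary nonzero $\bv \in \mathbb{R}^n$, write $\bv = \bv_0 + \bv_1$ with $\bv_0 \in \cH_n$ and $\bv_1 \in \spn(\mathbb{J})$. By Lemma \ref{lem:A_matrix}, $A\bv = -\bv_0 + \bv_1$. Because both $M$ and $A$ are symmetric, $\bv^T AM\bv = (A\bv)^T M\bv = \bv^T M (A\bv) = \bv^T MA\bv$, so $\bv^T D \bv = 2\bv^T MA \bv$. Expanding $\bv^T MA \bv = (\bv_0+\bv_1)^T M(-\bv_0+\bv_1)$ and cancelling the cross terms via $\bv_0^T M \bv_1 = \bv_1^T M \bv_0$ yields
$$
\bv^T D \bv \;=\; 2\bigl(\bv_1^T M \bv_1 - \bv_0^T M \bv_0\bigr).
$$

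Next I bound the two pieces. Theorem \ref{thm:CND} with $k=2$ gives $-\bv_0^T M \bv_0 \geq 0$, strictly positive when $\bv_0 \neq 0$. Writing $\bv_1 = c\mathbb{J}$ for some scalar $c$, we have $\bv_1^T M \bv_1 = c^2 \mathbb{J}^T M \mathbb{J} = 2c^2 \sum_{i<j} S(i,j) \geq 0$, strictly positive when $c \neq 0$ (since for $n \geq 2$ the tree has at least one edge, so $\sum_{i<j} S(i,j) \geq n-1 > 0$). As $\bv \neq 0$ forces at least one of $\bv_0, \bv_1$ to be nonzero, both terms are nonnegative and their sum is positive, establishing $\bv^T D \bv > 0$.

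The only real step is the symmetry identity $\bv^T D \bv = 2(\bv_1^T M \bv_1 - \bv_0^T M \bv_0)$; once that is in hand, the content is immediate from already-established results. I don't anticipate any obstacle — morally the argument simply decomposes $D$ along the two eigenspaces of $A$ (eigenvalues $\pm 1$) and observes that on each of them the quadratic form $\bv \mapsto \bv^T M \bv$ has the sign needed to make the combination in $D$ come out positive.
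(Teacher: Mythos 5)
Your proof is correct and follows essentially the same route as the paper's: decompose $\bv$ into its $\cH_n$ and $\spn(\mathbb{J})$ components, use the symmetry of $M$ and $A$ to reduce $\bv^T D\bv$ to $2(\bv_1^T M\bv_1 - \bv_0^T M\bv_0)$, and conclude via Theorem \ref{thm:CND}. The only (welcome) addition is that you explicitly justify $\mathbb{J}^T M \mathbb{J} > 0$, which the paper merely asserts.
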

\begin{proof}
    Let $\bx$ be nonzero.  Write $\bx = \by + \bz$ where $\by \in \cH_n$ and $\bz \in \spn(\mathbb{J})$; and let $A = \frac{2}{n} J_n - I_n$.  Note that $A\bz = \bz$ and $A\by = -\by$ by Lemma \ref{lem:A_matrix}.  Thus,
    \begin{align*}
        \bx^T D \bx &= (\by^T + \bz^T) (MA+AM) (\by + \bz) \\
        &= 2\by^T MA \by + 2 \bz^T MA \bz + 2 \by^T(MA + AM) \bz
    \end{align*}
    using the fact that $A$ and $M$ are symmetric. Continuing the calculation,
    \begin{align*}
        \bx^T D \bx &= 2\by^T M A \by + 2 \bz^T MA \bz + 2 \by^T MA \bz + 2 \by^T AM \bz \\
        &= -2\by^T M \by + 2 \bz^T M \bz + 2 \by^T M \bz - 2 \by^T M \bz \\
        &= -2\by^T M \by + 2 \bz^T M \bz > 0,
    \end{align*}
    since $M$ is positive on $\spn(\mathbb{J})$ and negative on $\cH_n$ by Theorem \ref{thm:CND}.
\end{proof}

\begin{lemma}
The matrix $C$ is positive definite.
\end{lemma}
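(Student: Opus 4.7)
The plan is to leverage Lemma \ref{lem-2} essentially for free by using the identity $\bx^T C \bx = \tfrac{1}{2}\bx^T(C+C^T)\bx$, which holds for any real matrix $C$ because $\bx^T C \bx$ is a scalar and so equals its own transpose. The content of the statement is the standard extension of ``positive definiteness'' to non-symmetric real matrices: namely, that $\bx^T C \bx > 0$ for every nonzero $\bx$, equivalently that the symmetric part of $C$ is positive definite in the usual sense.

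The key computation I would do first is to identify $C + C^T$ with $D$. Both $M$ (a distance matrix) and $A = \tfrac{2}{n} J_n - I_n$ are symmetric (the latter by Lemma \ref{lem:A_matrix}), so
\[
C^T = (MA)^T = A^T M^T = AM,
\]
and hence $C + C^T = MA + AM = D$. Next, I would apply the scalar-transpose trick: for any nonzero $\bx \in \mathbb{R}^n$,
\[
\bx^T C \bx = \tfrac{1}{2}\bx^T(C + C^T)\bx = \tfrac{1}{2}\bx^T D \bx > 0
\]
by Lemma \ref{lem-2}. Since this holds for every nonzero $\bx$, $C$ is positive definite in the (non-symmetric) sense. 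In particular, all eigenvalues of $C$ have strictly positive real part.

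There is no real obstacle here; the only mildly subtle point is making clear that ``positive definite'' for the not-necessarily-symmetric matrix $C$ is being used in the sense that $\bx^T C \bx > 0$ for all nonzero $\bx$, and that this sense is equivalent to the positive definiteness of $\tfrac{1}{2}(C + C^T) = \tfrac{1}{2}D$, which was already secured in the previous lemma. The entire weight of the argument was shifted to Lemma \ref{lem-2}, where the split $\bx = \by + \bz$ along $\cH_n \oplus \spn(\mathbb{J})$ combined with the conditional negative definiteness of $M$ (Theorem \ref{thm:CND}) and the obvious positivity of $M$ on $\spn(\mathbb{J})$ did the work.
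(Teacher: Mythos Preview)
Your argument is correct and essentially identical to the paper's own proof: both observe that $C+C^T=D$ (using the symmetry of $M$ and $A$) and then deduce $\bx^T C \bx = \tfrac{1}{2}\bx^T D \bx > 0$ from Lemma~\ref{lem-2}. The only differences are cosmetic---you spell out why $C^T=AM$ and add the remark about eigenvalues having positive real part.
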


\begin{proof}
Clearly, $D = C + C^T$. It is known that $D$ is positive definite in Lemma \ref{lem-2}. 
That is to say, for any nonzero ${\bf x} \in \mathbb{R}^n$, we have 
$$
{\bf x}^T D {\bf x} > 0 \, .
$$
So
$$
{\bf x}^T (C + C^T) {\bf x} = {\bf x}^T C {\bf x} + {\bf x}^T C^T {\bf x} = 2 {\bf x}^T C {\bf x} > 0 \, ,
$$
as desired.
\end{proof}

Since $C$ is positive definite and $\det(2J_n/n-I_n) = (-1)^{n-1}$, 
$$
\sgn(\det(M)) = \frac{\sgn(\det(C))}{\sgn(\det(2J_n/n - I_n))} = (-1)^{n-1}.
$$

Unfortunately, the above strategy does not easily generalize to $k > 2$, although we obtain a weaker result as follows.  Let $A$ be the matrix mentioned above representing reflection through $\mathbb{J}$, i.e., so that $A \bx = -\bx$ if $\bx \in \mathcal{H}_n$ and $A \bx = \bx$ if $\bx \in \spn(\mathbb{J})$.  Then, for $k=4$, write 
$$
C_1 = M(\cdot,\cdot,\cdot,A\cdot)
$$
and
$$
C_3 = M(\cdot,A\cdot,A\cdot,A\cdot).
$$

\begin{prop} The order-$4$ hypermatrix $C_1 + C_3$ is positive definite.    
\end{prop}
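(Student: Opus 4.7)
The plan is to reduce the claim to the two facts about $M$ on $\cH_n$ and on $\spn(\mathbb{J})$ that are already in hand from Section \ref{sec:only_on_H}. First I would decompose $\bx = \by + \bz$ with $\by \in \cH_n$ and $\bz \in \spn(\mathbb{J})$, so that by Lemma \ref{lem:A_matrix}, $A\bx = \bz - \by = \bx - 2\by$. Expanding each of $C_1(\bx,\bx,\bx,\bx) = M(\bx,\bx,\bx,\bx-2\by)$ and $C_3(\bx,\bx,\bx,\bx) = M(\bx,\bx-2\by,\bx-2\by,\bx-2\by)$ by multilinearity and using the symmetry of $M$, both become polynomial combinations of the mixed moments
$$
M_j := M\bigl(\underbrace{\bx,\ldots,\bx}_{4-j},\, \underbrace{\by,\ldots,\by}_{j}\bigr), \qquad j = 0,1,2,3.
$$
A short computation yields $C_1 = M_0 - 2M_1$ and $C_3 = M_0 - 6M_1 + 12M_2 - 8M_3$, hence
$$
(C_1 + C_3)(\bx,\bx,\bx,\bx) = 2\bigl(M_0 - 4M_1 + 6M_2 - 4M_3\bigr).
$$

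The key step is recognizing the coefficients $1,-4,6,-4$ as the fourth-row alternating binomial coefficients with the $+M_4$ summand missing. By multilinearity and symmetry,
$$
\sum_{j=0}^{4} \binom{4}{j}(-1)^j M_j = M(\bx-\by,\bx-\by,\bx-\by,\bx-\by) = M(\bz,\bz,\bz,\bz),
$$
so the expression collapses to
$$
(C_1 + C_3)(\bx,\bx,\bx,\bx) = 2\bigl(M(\bz,\bz,\bz,\bz) - M(\by,\by,\by,\by)\bigr).
$$

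To finish, I would verify that each summand has the correct sign. By Corollary \ref{cor:diagonalform}, $M(\by,\by,\by,\by) = -2\sum_{e \in E(T)}(\ba_e^{\prime T}\by)^4 \leq 0$, with equality iff $\by = 0$, since $\{\ba_e'\}_e$ is a basis for $\cH_n$. Writing $\bz = c\mathbb{J}$ and applying Theorem \ref{thm:mainthm} with $\bc = c\mathbb{J}$ gives $C = cn$ and $\alpha_e = c|A(e)|$, so
$$
M(\bz,\bz,\bz,\bz) = c^4 \sum_{e \in E(T)}\bigl(n^4 - |A(e)|^4 - |B(e)|^4\bigr) \geq 0,
$$
with equality iff $c = 0$, since each summand is strictly positive by the elementary inequality $a^4 + b^4 < (a+b)^4$ for $a,b \geq 1$. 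Combining, $(C_1 + C_3)(\bx,\bx,\bx,\bx) > 0$ for every nonzero $\bx \in \mathbb{R}^n$ (with $n \geq 2$).

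The main obstacle is the algebraic bookkeeping needed to see that $C_1 + C_3$ telescopes to the difference $M(\bz,\bz,\bz,\bz) - M(\by,\by,\by,\by)$ via the fourth-row binomial identity; once this collapse is in hand the sign analysis follows at once from the tools of Section \ref{sec:only_on_H}. Generalizing the approach to higher even $k$ would require assembling a linear combination of forms $M((\cdot)^{k-j}, (A\cdot)^{j})$ whose expansion again isolates the pure moments in $\by$ and $\bz$ with compatible signs, and it is not obvious that such a combination exists for every $k$.
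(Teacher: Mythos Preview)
Your argument is correct and follows essentially the same route as the paper: decompose the input into its $\cH_n$ and $\spn(\mathbb{J})$ components, expand $C_1$ and $C_3$ by multilinearity and symmetry, observe that all mixed moments cancel in the sum, and conclude from conditional negative definiteness on $\cH_n$ together with positivity on $\spn(\mathbb{J})$. The only cosmetic difference is that the paper writes the input as $\bx+\by$ with $A(\bx+\by)=\bx-\by$ and expands directly, whereas you substitute $A\bx=\bx-2\by$ and package the cancellation via the fourth-row binomial identity $\sum_j\binom{4}{j}(-1)^jM_j=M(\bz,\ldots,\bz)$; the resulting identity $(C_1+C_3)(\bx,\ldots,\bx)=2M(\bz,\ldots,\bz)-2M(\by,\ldots,\by)$ is the same.
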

\begin{proof}
Suppose $\bx \in \spn(\mathbb{J})$ and $\by \in \mathcal{H}_n$. Since $M$ is symmetric, $A\bx = \bx$ and $A \by = -\by$, by multilinearity we have
\begin{align*}
C_1(\bx+\by,\bx+\by,\bx+\by,\bx+\by) &= M(\bx+\by,\bx+\by,\bx+\by,\bx-\by) \\
&= M(\bx,\bx,\bx,\bx) + (3-1) M(\bx,\bx,\bx,\by) \\
& \qquad + (3-3) M(\bx,\bx,\by,\by) + (1-3) M(\bx,\by,\by,\by) - M(\by,\by,\by,\by) \\
&= M(\bx,\bx,\bx,\bx) + 2 M(\bx,\bx,\bx,\by) - 2 M(\bx,\by,\by,\by) - M(\by,\by,\by,\by)
\end{align*}
and
\begin{align*}
C_3(\bx+\by,\bx+\by,\bx+\by,\bx+\by) &= M(\bx+\by,\bx-\by,\bx-\by,\bx-\by) \\
&= M(\bx,\bx,\bx,\bx) + (1-3) M(\bx,\bx,\bx,\by) \\
& \qquad + (3-3) M(\bx,\bx,\by,\by) + (3-1) M(\bx,\by,\by,\by) - M(\by,\by,\by,\by) \\
&= M(\bx,\bx,\bx,\bx) - 2 M(\bx,\bx,\bx,\by) + 2 M(\bx,\by,\by,\by) - M(\by,\by,\by,\by).
\end{align*}
Thus,
\begin{equation} \label{eq3}
(C_1+C_3)(\bx+\by,\bx+\by,\bx+\by,\bx+\by) = 2 M(\bx,\bx,\bx,\bx) - 2M(\by,\by,\by,\by) > 0 
\end{equation}
by Theorem \ref{thm:CND} and the fact that $M$ is nonnegative.
\end{proof}

In addition to the above arguments, we have computational evidence that $C_1$ is positive definite (which also implies that $C_3$ is positive definite and Conjecture \ref{conj:sign} for $k=4$).  By \cite{Qi05}, an even-order hypermatrix is positive definite iff all of its ``$H$-eigenvalues'' -- real eigenvalues associated with a real eigenvector -- are positive.  Thus, we ask: is it true that the $H$-eigenvalues of $C_1$ are all positive?  Does this generalize to all even $k$?

\end{document}